\setlist[description]{%
  itemsep=0.05cm,               
  font={\normalfont\textsc}, 
 leftmargin=\parindent,
 labelindent=\parindent
}
\theoremstyle{definition}
\newtheorem{theorem}{Theorem}[section]
\newtheorem{lemma}[theorem]{Lemma}
\newtheorem{proposition}[theorem]{Proposition}
\theoremstyle{definition}
\newtheorem*{question*}{Question}
\newtheorem{remarks}[theorem]{Remarks}
\definecolor{blue-url}{RGB}{0,0,100}
\definecolor{red-url}{RGB}{100,0,0}
\definecolor{green-url}{RGB}{0,100,0}
\definecolor{light-yellow}{RGB}{255,255,128}
\definecolor{light-blue}{RGB}{193,255,255}
\definecolor{light-red}{RGB}{239,83,80}
\renewcommand{\,}{\kern 0.1em}
\providecommand\llb{\llbracket}
\providecommand\rrb{\rrbracket}
\providecommand\aut{{\rm Aut}}
\newcommand{\evid}[1]{\textsf{#1}}
\newcommand{\fin}{\mathrm{fin}}
\newline\vspace{\abovedisplayskip}\hbox to \textwidth\bgroup\hss$\displaystyle}
\egroup\vspace{\belowdisplayskip}}
\DeclareFontFamily{OMX}{MnSymbolE}{}
\DeclareSymbolFont{MnLargeSymbols}{OMX}{MnSymbolE}{m}{n}
\DeclareFontShape{OMX}{MnSymbolE}{m}{n}{
	<-6>  MnSymbolE5
	<6-7>  MnSymbolE6
	<7-8>  MnSymbolE7
	<8-9>  MnSymbolE8
	<9-10> MnSymbolE9
	<10-12> MnSymbolE10
	<12->   MnSymbolE12
}{}
\DeclareFontShape{OMX}{MnSymbolE}{b}{n}{
	<-6>  MnSymbolE-Bold5
	<6-7>  MnSymbolE-Bold6
	<7-8>  MnSymbolE-Bold7
	<8-9>  MnSymbolE-Bold8
	<9-10> MnSymbolE-Bold9
	<10-12> MnSymbolE-Bold10
	<12->   MnSymbolE-Bold12
}{}
\let\llangle\@undefined
\let\rrangle\@undefined
\DeclareMathDelimiter{\llangle}{\mathopen}%
{MnLargeSymbols}{'164}{MnLargeSymbols}{'164}
\DeclareMathDelimiter{\rrangle}{\mathclose}%
{MnLargeSymbols}{'171}{MnLargeSymbols}{'171}
\begin{document}
\title{The Automorphism Group of the Finitary Power Monoid \\ of the Integers under Addition}
\author{Salvatore Tringali}
\address{(S.~Tringali) School of Mathematical Sciences, Hebei Normal University | Shijiazhuang, Hebei province, 050024 China}
\email{salvo.tringali@gmail.com}

\author{Kerou Wen}
\address{(K.~Wen) School of Mathematical Sciences, Hebei Normal University | Shijiazhuang, Hebei province, 050024 China}
\email{kerou.wen.hebnu@outlook.com}

\subjclass[2020]{Primary 08A35, 11P99. Secondary 20M13.}
\keywords{Automorphism group, finitary power monoid, sumset.}
\begin{abstract}

Endowed with the binary operation of set addition carried over from the integers, the family $\mathcal P_{\mathrm{fin}}(\mathbb Z) $ of all non-empty finite subsets of $\mathbb Z$ forms a monoid whose neutral element is the singleton $\{0\}$. 
  
Building upon recent work by Tringali and Yan [J.\ Combin.\ Theory Ser.\ A, 2025], we determine the automorphisms of $\mathcal P_{\mathrm{fin}}(\mathbb Z)$.
In particular, we find that the automorphism group of $\mathcal P_{\mathrm{fin}}(\mathbb Z)$ is isomorphic to the direct product of a cyclic group of order two by the infinite dihedral group.

\end{abstract}

\maketitle

\thispagestyle{empty}

\section{Introduction}
\label{sec:intro}

Let $H$ be a semigroup (see the end of the section for notation and terminology). Endowed with the binary operation of setwise multiplication 
\[
(X, Y) \mapsto XY := \{xy \colon x \in X, \, y \in Y\}
\]
induced by $H$, the family of all non-empty finite subsets of $H$ becomes itself a semigroup, herein denoted by $\mathcal P_\fin(H)$ and named the \evid{finitary power semigroup} of $H$. 

If, in particular, $H$ is a monoid and $1_H$ is its neutral element, then $\mathcal P_\fin(H)$ is also a monoid. Moreover, the finite subsets of $H$ containing $1_H$ form a submonoid of $\mathcal P_\fin(H)$, henceforth denoted by $\mathcal P_{\fin,1}(H)$. Accordingly, we call  $\mathcal P_\fin(H)$ and $\mathcal P_{\fin,1}(H)$ the \evid{finitary power monoid} and the \evid{reduced finitary power monoid} of $H$. Their neutral element is the singleton $\{1_H\}$.

Finitary power semigroups are among the most basic structures in a complex hierarchy generally referred to as \evid{power semigroups}. These objects were first systematically studied by Tamura and Shafer in the late 1960s \cite{Tam-Sha1967}, and more recently, from multiple new perspectives, in a series of papers by Fan and Tringali \cite{Fa-Tr18}, Antoniou and Tringali \cite{An-Tr18}, Bienvenu and Geroldinger \cite{Bie-Ger-22}, Tringali and Yan \cite{Tri-Yan2023(a),Tri-Yan2023(b)}, Garc\'ia-S\'anchez and Tringali \cite{GarSan-Tri2025(a)}, Gonzalez et al.~\cite{Gonz-Li-Rabi-Rodr-Tira-2025}, Aggarwal et al.~\cite{Agga-Gott-Lu-arXiv-21-12}, Dani et al.~\cite{Dani-et-al2025}, Cossu and Tringali \cite{Cos-Tri2025}, etc.

Power semigroups are interesting objects for a number of reasons. First, they provide a most natural algebraic framework for a number of important problems in additive combinatorics and related fields, like S\'{a}rk\"ozy's conjecture \cite[Conjecture 1.6]{Sark2012} on the ``additive irreducibility'' of the set of [non-zero] squares in a finite field of prime order.
Second, the arithmetic of power monoids --- with emphasis on ques\-tions concerned with the possibility or impossibility of factoring certain sets as a product of other sets that are irreducible in a properly defined sense --- is a rich topic in itself and has been pivotal to some radically new developments \cite{An-Tr18,Cos-Tri2025,Cos-Tri2023,Fa-Tr18,Tr20(c)} in factorization theory, a subfield of algebra so far largely confined to commutative domains and cancellative commutative monoids. Third, power semigroups have long been known to play a central role in the theory of automata and formal languages \cite{Alm02, Pin1995}.

In the present paper, motivated by work carried out over the past two years by 
Bienvenu and Geroldinger \cite{Bie-Ger-22} and Tringali and Yan \cite{Tri-Yan2023(a), Tri-Yan2023(b)}, we address the problem of determining the automorphism group of $\mathcal{P}_{\fin}(H)$. (Unless otherwise specified, the word ``morphism'' will always refer to a \textit{semigroup homomorphism}.)
Apart from being an interesting question in its own right (for the automorphisms of an object in a given category are a measure of the ``symmetries'' of the object itself, and their investigation is a fundamental problem in many areas of mathematics), a better understanding of the algebraic properties of power semigroups will hopefully lead to a deeper understanding of other aspects of their theory.

To set the stage, let $\aut(S)$ be the automorphism group of a semigroup $S$, henceforth called the \evid{auto\-group} of $S$. It is not difficult to show (see \cite[Remark 1.1]{Tri-Yan2023(a)}) that, for each $f \in \aut(H)$, the map
$$
\mathcal P_{\fin}(H) \to \mathcal P_{\fin}(H) \colon X \mapsto f[X] := \{f(x) \colon x \in X\}
$$
is an automorphism of $\mathcal P_{\fin}(H)$, hereby referred to as the (\evid{finitary}) \evid{augmentation} of $f$. We say that an auto\-mor\-phism of $\mathcal P_{\fin}(H)$ is \evid{inner} if it is the augmentation of an automorphism of $H$.

Accordingly, we have a well-defined map $\Phi \colon \mathrm{Aut}(H) \to \mathrm{Aut}(\mathcal P_{\fin}(H))$ sending an automorphism of $H$ to its aug\-men\-ta\-tion.
In addition, it is easily checked that
$$
\Phi(f \circ g)(X) = f \circ g[X] = f[g[X]] = f[\Phi(g)(X)] = \Phi(f)(\Phi(g)(X)) = \Phi(f) \circ \Phi(g)(X),
$$
 for all $f, g \in \aut(H)$ and $X \in \mathcal P_\fin(H)$.
If, on the other hand, $f(x) \ne g(x)$ for some $x \in H$ and we take $X := \{x\}$, then $\Phi(f)(X) = \{f(x)\} \ne \{g(x)\} = \Phi(g)(X)$ and hence $\Phi(f) \ne \Phi(g)$.

In other words, $\Phi$ is an injective group homomorphism from $\mathrm{Aut}(H)$ to $\mathrm{Aut}(\mathcal P_\fin(H))$. It is therefore natural to ask if $\Phi$ is also surjective (and therefore an iso\-mor\-phism), which is tantamount to saying that every automorphism of $\mathcal P_{\fin}(H)$ is inner. Unsurprisingly, this is not generally true, and determining the autogroup of $\mathcal P_{\fin}(H)$ is usually much harder than determining $\mathrm{Aut}(H)$, as we are going to demonstrate in the basic case where $H$ is the additive group $\mathbb Z_+ := (\mathbb Z, +)$ of the (ring of) integers. 

More precisely, let us denote by $\mathcal P_{\fin}(\mathbb Z)$ the finitary power monoid of $\mathbb Z_+$. In contrast to what we have done so far with an arbitrary semigroup $H$, we will write $\mathcal P_\fin(\mathbb Z)$ additively, so that the operation on $\mathcal P_\fin(\mathbb Z)$ sends a pair $(X, Y)$ of non-empty finite subsets of $\mathbb Z$ to their sumset 
$$
X + Y := \{x+y \colon x \in X, \, y \in Y\}.
$$
In Theorem \ref{thm:the-automorphisms-of-Pfin(Z)}, we determine the automorphisms of $\mathcal P_{\mathrm{fin}}(\mathbb Z)$. Then, in Theorem \ref{thm:autogroup-of-Pfin(Z)}, we obtain that the autogroup of $\mathcal P_{\mathrm{fin}}(\mathbb Z)$ is isomorphic to the direct product $\mathbb Z_2 \times \mathrm{Dih}_\infty$. Here and throughout, $\mathbb Z_2$ is the group of units $\{\pm 1\}$ of (the multiplicative monoid of) the integers (we regard $\mathbb Z_2$ as the canonical realization of a cyclic group of order $2$); and $\mathrm{Dih}_\infty$ is the \evid{infinite dihedral group} \cite[Example 5.5(1)]{Dum-Foo-2004}, i.e., the external semidirect product $\mathbb Z_2 \ltimes_\alpha \mathbb Z_+$ of $\mathbb Z_2$ by $\mathbb Z_+$ with respect to the unique group homo\-mor\-phism $\alpha \colon \mathbb Z_2 \to \allowbreak \aut(\mathbb Z_+)$ that sends $-1$ to the only non-trivial automorphism of $\mathbb Z_+$, i.e., the inversion map $x \mapsto -x$.

We have striven to make our proofs as self-contained as possible. One exception is Lemma \ref{lem:necessary-condition-on-autos}, which relies on \cite[Theorem 3.2]{Tri-Yan2023(b)} for the explicit description of the automorphisms of the reduced finitary power monoid of the monoid of non-negative integers under addition (it turns out that there are only two of them). We propose directions for future research in Sect.~\ref{sect:future}.

\subsection*{Notation and terminology.} We refer the reader to Grillet \cite{Gril-2001} for general background on semigroups and monoids, and to Robinson \cite{robins2012} for general background on groups. In particular, if $H$ and $K$ are semigroups, then $H \cong K$ means that there is a (semigroup) isomorphism $f$ from $H$ to $K$. Moreover, we denote by $M^\times$ the \evid{group of units} (or \evid{invertible elements}) of a monoid $M$. 

We write $\mathbb N$ for the (set of) non-negative integers, and $\mathbb Z$ for the integers. If $a, b \in \mathbb Z$, we define $\llb a, b \rrb$ to be the \evid{discrete interval} $\{x \in \allowbreak \mathbb Z \colon \allowbreak a \le x \le b\}$. Given $k \in \mathbb N$ and $X \subseteq \mathbb Z$, we set 
\[
kX := \{x_1 + \allowbreak \cdots + x_k: x_1, \ldots, x_k \in X\}
\qquad\text{and}\qquad
{-X} := \{-x: x \in X\};
\]
in particular, $0X = \{0\}$. Lastly, for a function $f$ on $\mathbb Z$, we let $-f$ be the map $\mathbb Z \to \mathbb Z \colon x \mapsto -f(x)$; and for a function $F$ on $\mathcal P_\fin(\mathbb Z)$, we let $-F$ be the map $\mathcal P_\fin(\mathbb Z) \to \mathcal P_\fin(\mathbb Z) \colon X \mapsto -F(X)$.

Further notation and ter\-mi\-nol\-o\-gy, if not explained, are standard or should be clear from the context.

\section{Reduced quotients and totally ordered abelian groups}

Below we prove a series of propositions of a certain generality that will be used in Sect.\ \ref{sec:main-results} to derive \textit{necessary} conditions on the  automorphisms of the finitary power monoid of the additive group of integers. While a more focused approach would have spared certain technicalities, the broader generality of our results may be valuable in addressing analogous problems where the integers are replaced by structures with ``comparable properties'' (see Sect.~\ref{sect:future} for a couple of ideas in this direction). 

We begin with some definitions. Given a \textit{commutative} monoid $M$, we denote by $\simeq_M$ the binary relation on (the underlying set of) $M$ whose graph consists of all pairs $(x,y) \in M \times M$ such that $y \in xM^\times$. It is routine to check that $\simeq_M$ is a congruence on $M$, that is, $\simeq_M$ is an equivalence relation with the additional property that $x_1 \simeq_M y_1$ and $x_2 \simeq_M y_2$ imply $x_1 x_2 \simeq_M y_1 y_2$.
We refer to the factor monoid $M/{\simeq_M}$ as the \evid{reduced quotient} of $M$, and to the set $\{y \in M \colon x \simeq_M y\}$ as the \evid{$\simeq_M$-class} of an element $x \in M$.

\begin{proposition}\label{prop:induced-iso-on-reduced-quotients}
Let $H$ and $K$ be commutative monoids. Every (semigroup) isomorphism $\varphi \colon H \to K$ induces a well-defined isomorphism $\widetilde{\varphi}$ from the reduced quotient $H/{\simeq_H}$ of $H$ to the reduced quotient $K/{\simeq_K}$ of $K$, by sending the $\simeq_H$-class of an element $x \in H$ to the $\simeq_K$-class of $\varphi(x)$.
\end{proposition}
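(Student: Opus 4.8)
The plan is a direct verification in four short steps. First I would record a preliminary observation: although $\varphi$ is assumed to be only a semigroup isomorphism, it automatically respects the monoid structure. Indeed, given $y \in K$, pick $x \in H$ with $y = \varphi(x)$; then $\varphi(1_H)\,y = \varphi(1_H x) = \varphi(x) = y$ and, symmetrically, $y\,\varphi(1_H) = y$, so $\varphi(1_H)$ is a neutral element of $K$ and hence $\varphi(1_H) = 1_K$ by uniqueness. Consequently, if $u \in H^\times$ then $\varphi(u)\,\varphi(u^{-1}) = \varphi(1_H) = 1_K = \varphi(u^{-1})\,\varphi(u)$, so $\varphi(u) \in K^\times$; applying the same reasoning to the inverse isomorphism $\varphi^{-1}$ shows that $\varphi$ restricts to a group isomorphism $H^\times \to K^\times$.

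Second, I would check that the assignment $\widetilde\varphi$ is well defined. Suppose $x, x' \in H$ satisfy $x \simeq_H x'$, that is, $x' \in x H^\times$; write $x' = xu$ with $u \in H^\times$. By the first step, $\varphi(x') = \varphi(x)\,\varphi(u)$ with $\varphi(u) \in K^\times$, hence $\varphi(x') \in \varphi(x) K^\times$, i.e.\ $\varphi(x) \simeq_K \varphi(x')$. Thus the $\simeq_K$-class of $\varphi(x)$ depends only on the $\simeq_H$-class of $x$, so $\widetilde\varphi$ is a well-defined map from $H/{\simeq_H}$ to $K/{\simeq_K}$.

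Third, I would verify that $\widetilde\varphi$ is a homomorphism. Writing $[\,\cdot\,]_H$ and $[\,\cdot\,]_K$ for the canonical projections onto the reduced quotients, we have, for all $x, y \in H$,
\[
\widetilde\varphi\bigl([x]_H [y]_H\bigr) = \widetilde\varphi\bigl([xy]_H\bigr) = [\varphi(xy)]_K = [\varphi(x)\varphi(y)]_K = [\varphi(x)]_K [\varphi(y)]_K = \widetilde\varphi([x]_H)\,\widetilde\varphi([y]_H),
\]
where the second and fourth equalities hold because $[\,\cdot\,]_H$ and $[\,\cdot\,]_K$ are homomorphisms ($\simeq_H$ and $\simeq_K$ being congruences, as noted just before the statement), and the third holds because $\varphi$ is multiplicative; moreover $\widetilde\varphi([1_H]_H) = [\varphi(1_H)]_K = [1_K]_K$ by the first step. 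Finally, bijectivity comes for free from the symmetry of the construction: steps two and three applied to $\varphi^{-1} \colon K \to H$ produce a homomorphism $\widetilde{\varphi^{-1}} \colon K/{\simeq_K} \to H/{\simeq_H}$, and since $\varphi^{-1} \circ \varphi = \mathrm{id}_H$ and $\varphi \circ \varphi^{-1} = \mathrm{id}_K$ one reads off directly from the definitions that $\widetilde{\varphi^{-1}} \circ \widetilde\varphi = \mathrm{id}_{H/{\simeq_H}}$ and $\widetilde\varphi \circ \widetilde{\varphi^{-1}} = \mathrm{id}_{K/{\simeq_K}}$. Hence $\widetilde\varphi$ is an isomorphism. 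The argument is entirely routine; the only point worth flagging — and the one on which everything hinges — is the preliminary observation that a semigroup isomorphism between monoids automatically preserves neutral elements and units, which is precisely what makes the passage to reduced quotients behave functorially.
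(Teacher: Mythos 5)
Your proof is correct and follows essentially the same route as the paper: in both arguments the key point is that a semigroup isomorphism between monoids automatically preserves the neutral element and carries $H^\times$ onto $K^\times$, so that the congruences $\simeq_H$ and $\simeq_K$ correspond under $\varphi$. The only difference is presentational: you verify the neutral-element fact and the quotient-isomorphism machinery by hand (including bijectivity via $\widetilde{\varphi^{-1}}$), whereas the paper delegates these routine steps to citations, namely to \cite{Tri-2023(c)} for the fact that $\varphi(1_H)=1_K$ and to the proof of Proposition I.2.6 in Grillet's book \cite{Gril-2001} for the passage to quotients.
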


\begin{proof}
Let $\varphi^{-1}(\simeq_K)$ be the binary relation on $H$ whose graph consists of all pairs $(x, y) \in H \times H$ such that $\varphi(x) \simeq_K \varphi(y)$. It is easily checked that $\varphi$ is in fact a \textit{monoid} isomorphism, namely, it maps the neutral element $1_H$ of $H$ to the neutral element $1_K$ of $K$ (see, e.g., the last lines of \cite[Sect.~2]{Tri-2023(c)}). 
Since the functional inverse of $\varphi$ is an isomorphism from $K$ to $H$, it follows that $\varphi^{-1}(K^\times) = H^\times$. Consequently, for all $x, y \in H$, we have that $(x,y) \in \varphi^{-1}(\simeq_K)$ if and only if $\varphi(y) \in \varphi(x) K^\times$, if and only if $y \in x H^\times$, if and only if $x \simeq_H y$. By the proof of \cite[Proposition I.2.6]{Gril-2001}, this guarantees that $\widetilde{\varphi}$ is a well-defined isomorphism from $H/\simeq_H$ to $K/\simeq_K$.
\end{proof}

We recall that a \evid{totally ordered group} is a pair $\mathcal G = (G, \preceq)$ consisting of a group $G$ and a total order $\preceq$ on (the underlying set of) $G$ such that $x \preceq y$ implies $uxv \preceq uyv$ for all $u, v \in G$. 

If $\mathcal G = (G, \preceq)$ is a totally ordered group, then it is easily verified that $\mathcal G_\angle := \{x \in G \colon 1_G \preceq x\}$ is a submonoid of $G$, hereby referred to as the \evid{non-negative cone} of $\mathcal G$. Moreover, for every non-empty finite subset $X$ of $G$, there exists a \textit{unique} element $x_\ast \in X$ such that $x_\ast \preceq y$ for all $y \in X$; we refer to $x_\ast$ as the \evid{$\preceq$-minimum} of $X$. Accordingly, we have a well-defined function $\mu_\mathcal{G} \colon \mathcal P_\fin(G) \to G$ that takes a non-empty finite subset of $G$ to its $\preceq$-minimum. We call $\mu_\mathcal{G}$ the \evid{minimizer} of $\mathcal G$.

\begin{remarks}
\label{rem:minimizer}
\begin{enumerate*}[label=\textup{(\arabic{*})}, mode=unboxed]
\item\label{rem:minimizer(1)} The usual ordering on $\mathbb Z$ turns the additive group of integers into a totally ordered (abelian) group, whose non-negative cone is the set $\mathbb N$ of non-negative integers.
\end{enumerate*}

\begin{enumerate*}[label=\textup{(\arabic{*})}, mode=unboxed, resume]
\item\label{rem:minimizer(2)} Let $\mu$ be the minimizer of a totally ordered group $(G, \preceq)$, and let $X, Y \in \mathcal P_\fin(G)$. If $x_\ast := \mu(X)$ and $y_\ast := \mu(Y)$, then $x_\ast \preceq x$ and $y_\ast \preceq y$ for all $x \in X$ and $y \in Y$, which, by the compatibility of the order with the group operation, yields $1_G \preceq x_\ast^{-1} x$ and $x_\ast y_\ast \preceq xy$. Since $1_G = x_\ast^{-1} x_\ast \in x_\ast^{-1} X$ and $x_\ast y_\ast \in XY$, it follows that 
\[
\mu(X)^{-1} X \in \mathcal P_{\fin,1}(G_\angle)
\qquad\text{and}\qquad
\mu(XY) = x_\ast y_\ast = \mu(X) \mu(Y); 
\]
in particular, the latter condition means that $\mu$ is a homomorphism from $\mathcal P_\fin(G)$ to $G$. Note also that $\mu$ is clearly surjective, because $\{a\} \in \mathcal P_\fin(G)$ and $\mu(\{a\}) = a$ for every $a \in G$. 
\end{enumerate*}
\end{remarks}

The previous remarks provide a ``template'' and the basis for the next results.

\begin{proposition}\label{prop:iso-into-the-reduced-finitary-PM}
Let $\mu$ be the minimizer of a totally ordered group $\mathcal G = (G, \preceq)$. Assume $G$ is abelian and denote by $\mathcal Q(G)$ the reduced quotient of $\mathcal P_\fin(G)$. Let 
$\psi$ be the binary relation from $\mathcal Q(G)$ to the reduced finitary power monoid $\mathcal P_{\fin,1}(\mathcal G_\angle)$ of the non-negative cone $\mathcal G_\angle$ of $\mathcal G$ that, for all $\simeq_{\mathcal P_\fin(G)}$-congruent sets $X, Y \in \mathcal P_\fin(G)$, maps the $\simeq_{\mathcal P_\fin(G)}$-class of $X$ to $\mu(Y)^{-1} Y$. Then $\psi$ is an isomorphism $\mathcal Q(G) \to \mathcal P_{\fin,1}(\mathcal G_\angle)$.
\end{proposition}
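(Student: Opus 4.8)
The plan is to realize $\psi$ as the homomorphism induced on the reduced quotient by an explicit retraction of $\mathcal P_\fin(G)$ onto $\mathcal P_{\fin,1}(\mathcal G_\angle)$, and then to invoke the universal property of quotient monoids. First I would introduce the map
\[
\rho \colon \mathcal P_\fin(G) \to \mathcal P_{\fin,1}(\mathcal G_\angle), \qquad X \mapsto \mu(X)^{-1} X .
\]
By Remark~\ref{rem:minimizer}\ref{rem:minimizer(2)}, the values of $\rho$ indeed lie in $\mathcal P_{\fin,1}(\mathcal G_\angle)$, and $\mu$ is a homomorphism from $\mathcal P_\fin(G)$ to $G$; since $G$ is abelian (so that $\mathcal P_\fin(G)$ is commutative), it follows that
\[
\rho(XY) = \mu(XY)^{-1} XY = \mu(X)^{-1} \mu(Y)^{-1} XY = \bigl(\mu(X)^{-1} X\bigr)\bigl(\mu(Y)^{-1} Y\bigr) = \rho(X) \rho(Y)
\]
for all $X, Y \in \mathcal P_\fin(G)$, i.e., $\rho$ is a homomorphism. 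Moreover $\rho$ fixes every $A \in \mathcal P_{\fin,1}(\mathcal G_\angle)$, because such an $A$ satisfies $1_G \in A \subseteq \mathcal G_\angle$ and hence has $\preceq$-minimum $1_G$, so that $\rho(A) = 1_G^{-1} A = A$; in particular, $\rho$ is onto.

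The core of the argument is to identify the kernel congruence $\ker \rho := \{(X, Y) \in \mathcal P_\fin(G) \times \mathcal P_\fin(G) \colon \rho(X) = \rho(Y)\}$ with $\simeq_{\mathcal P_\fin(G)}$. Here I would first record the elementary fact that the units of $\mathcal P_\fin(G)$ are precisely the singletons $\{g\}$ with $g \in G$: these are visibly invertible, and conversely $|XY| \ge |X|$ for all $X, Y \in \mathcal P_\fin(G)$ (because right translation by a fixed element of $Y$ embeds $X$ into $XY$ and is injective on the group $G$), so $XY = \{1_G\}$ forces $|X| = 1$, and symmetrically $|Y| = 1$. Granting this, on the one hand, if $X \simeq_{\mathcal P_\fin(G)} Y$ then $X = gY$ for some $g \in G$, whence $\mu(X) = g\mu(Y)$ and $\rho(X) = (g\mu(Y))^{-1}(gY) = \mu(Y)^{-1} Y = \rho(Y)$; on the other hand, if $\rho(X) = \rho(Y)$, then $X = \mu(X) \rho(X) = \mu(X) \rho(Y) = \bigl(\mu(X) \mu(Y)^{-1}\bigr) Y \in Y\mathcal P_\fin(G)^\times$, so $X \simeq_{\mathcal P_\fin(G)} Y$. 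Therefore $\ker \rho = {\simeq_{\mathcal P_\fin(G)}}$.

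It follows that the factor monoid $\mathcal P_\fin(G)/{\ker\rho}$ is literally $\mathcal Q(G)$, and by the universal property of quotient monoids (cf.\ \cite[Proposition~I.2.6]{Gril-2001}) the homomorphism $\rho$ factors through the canonical projection $\pi \colon \mathcal P_\fin(G) \to \mathcal Q(G)$ as $\rho = \overline{\rho} \circ \pi$, where $\overline{\rho} \colon \mathcal Q(G) \to \mathcal P_{\fin,1}(\mathcal G_\angle)$ is an \emph{injective} homomorphism carrying the $\simeq_{\mathcal P_\fin(G)}$-class of $X$ to $\rho(X) = \mu(X)^{-1} X$; being a factorization of the onto map $\rho$, it is also onto, hence an isomorphism. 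Finally I would check that $\overline{\rho}$ is the relation $\psi$: for $\simeq_{\mathcal P_\fin(G)}$-congruent sets $X$ and $Y$ one has $\overline{\rho}([X]) = \overline{\rho}([Y]) = \rho(Y) = \mu(Y)^{-1} Y$, which is exactly the value that $\psi$ attaches to the $\simeq_{\mathcal P_\fin(G)}$-class of $X$; this observation also re-confirms that $\psi$ is a (single-valued) function. I expect the only genuinely substantive point to be the equality $\ker \rho = {\simeq_{\mathcal P_\fin(G)}}$, and within it the inclusion ${\simeq_{\mathcal P_\fin(G)}} \subseteq \ker \rho$ --- the one that makes $\psi$ single-valued, and the one that uses the description of $\mathcal P_\fin(G)^\times$ as the set of singletons; everything else is routine.
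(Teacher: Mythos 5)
Your proof is correct and takes essentially the same route as the paper: the substantive verifications (well-definedness of the class map via the fact that the units of $\mathcal P_\fin(G)$ are the singletons together with $\mu(gY)=g\,\mu(Y)$; surjectivity because $\mu$ fixes sets containing $1_G$ inside the cone; injectivity via $X=\bigl(\mu(X)\mu(Y)^{-1}\bigr)Y$; the homomorphism property from $\mu$ being a homomorphism plus commutativity) are exactly those in the paper's proof. The only differences are cosmetic: you package the argument through the kernel congruence of $\rho$ and the universal property of quotient monoids rather than checking the properties of $\psi$ directly, and you prove the singleton-units fact from scratch where the paper cites \cite[Proposition 3.2(iv)]{An-Tr18}.
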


\begin{proof}
Write $[X]$ for the $\simeq_{\mathcal P_\fin(G)}$-class of a set $X \in \mathcal P_\fin(G)$, and let $Y \in [X]$. By \cite[Proposition 3.2(iv)]{An-Tr18}, the units of $\mathcal P_\fin(G)$ are precisely the one-element subsets of $G$. Consequently, $Y = aX$ for some $a \in G$. Since $\mathcal G$ is a totally ordered group, it follows that $\mu(Y) = a \mu(X)$, which in turn implies that
\[
\mu(Y)^{-1} Y = \mu(X)^{-1} a^{-1} aX = \mu(X)^{-1} X.
\]
This is sufficient to show that $\psi$ is a function from $\mathcal Q(G)$ to $\mathcal P_{\fin,1}(\mathcal G_\angle)$. 

Now, if $Y \in \mathcal P_{\fin,1}(\mathcal G_\angle)$, then $Y \in \mathcal P_\fin(G)$ and $\mu(Y) = 1_G$, with the result that $\psi([Y]) = Y$. If, on the other hand, $\psi([X]) = \psi([Y])$ for some $X, Y \in \mathcal P_\fin(G)$, then the commutativity of $G$ guarantees that
\[
Y = \allowbreak \mu(Y)\mu(X)^{-1} X \in X \mathcal P_\fin(G)^\times, 
\]
which yields $X \simeq_{\mathcal{P}_\fin(G)} Y$ and hence $[X] = [Y]$. Therefore, $\psi$ is both surjective and injective. 

It remains to see that 
$\psi$ is an isomorphism. 
Let $X, Y \in \mathcal P_\fin(G)$. By Remark \ref{rem:minimizer}\ref{rem:minimizer(2)}, $\mu$ is a hom\-o\-mor\-phism $\mathcal P_\fin(G) \to G$, and hence $\mu(XY)^{-1} = \mu(Y)^{-1} \mu(X)^{-1}$. Again by commutativity, this implies
\[
\psi([X][Y]) = \psi([XY]) = \mu(XY)^{-1} XY = \mu(X)^{-1} X \cdot \mu(Y)^{-1} Y = \psi([X]) \,\psi([Y]).
\]
It follows that $\psi$ is an isomorphism from $\mathcal Q(G)$ to $\mathcal P_{\fin,1}(\mathcal G_\angle)$, as wished.
\end{proof}

\begin{proposition}\label{prop:2.3}
Let $\mathcal H = (H, \le)$ and $\mathcal K = (K, \preceq)$ be abelian totally ordered groups, $\mu_\mathcal{H}$ be the min\-i\-miz\-er of $\mathcal H$, and $\varphi$ be an isomorphism from $\mathcal P_\fin(H)$ to $\mathcal P_\fin(K)$. There then exist an isomorphism $f$ from $\mathcal P_{\fin,1}(\mathcal H_\angle)$ to $\mathcal P_{\fin,1}(\mathcal K_\angle)$ and a homomorphism $\alpha$ from $\mathcal P_\fin(H)$ to $K$ such that
\[
\varphi(X) = \alpha(X) f(\mu_\mathcal{H}(X)^{-1} X), \qquad \text{for all } X \in \mathcal P_\fin(H).
\]
\end{proposition}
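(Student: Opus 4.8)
The plan is to assemble the desired $f$ and $\alpha$ by composing the isomorphisms produced in Propositions \ref{prop:induced-iso-on-reduced-quotients} and \ref{prop:iso-into-the-reduced-finitary-PM} with the minimizer homomorphism of Remark \ref{rem:minimizer}\ref{rem:minimizer(2)}. Write $\mathcal Q(H)$ and $\mathcal Q(K)$ for the reduced quotients of $\mathcal P_\fin(H)$ and $\mathcal P_\fin(K)$, respectively, and $[X]$ for the $\simeq$-class of a set $X$. First, applying Proposition \ref{prop:induced-iso-on-reduced-quotients} to $\varphi$ yields an isomorphism $\widetilde\varphi \colon \mathcal Q(H) \to \mathcal Q(K)$ with $\widetilde\varphi([X]) = [\varphi(X)]$. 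Next, Proposition \ref{prop:iso-into-the-reduced-finitary-PM} provides isomorphisms $\psi_\mathcal{H} \colon \mathcal Q(H) \to \mathcal P_{\fin,1}(\mathcal H_\angle)$ and $\psi_\mathcal{K} \colon \mathcal Q(K) \to \mathcal P_{\fin,1}(\mathcal K_\angle)$; moreover, by the computation in the proof of that proposition, $\psi_\mathcal{H}([X]) = \mu_\mathcal{H}(X)^{-1} X$ for every $X \in \mathcal P_\fin(H)$, and similarly for $\psi_\mathcal{K}$.

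Then I would set $f := \psi_\mathcal{K} \circ \widetilde\varphi \circ \psi_\mathcal{H}^{-1}$, which is a composite of isomorphisms and hence an isomorphism from $\mathcal P_{\fin,1}(\mathcal H_\angle)$ to $\mathcal P_{\fin,1}(\mathcal K_\angle)$; and $\alpha := \mu_\mathcal{K} \circ \varphi$, which, being the composite of the homomorphism $\mu_\mathcal{K} \colon \mathcal P_\fin(K) \to K$ of Remark \ref{rem:minimizer}\ref{rem:minimizer(2)} with the homomorphism $\varphi$, is a homomorphism from $\mathcal P_\fin(H)$ to $K$. It then remains to verify the displayed identity. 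Fix $X \in \mathcal P_\fin(H)$. By Remark \ref{rem:minimizer}\ref{rem:minimizer(2)} we have $\mu_\mathcal{H}(X)^{-1} X \in \mathcal P_{\fin,1}(\mathcal H_\angle)$, and since $\psi_\mathcal{H}([X]) = \mu_\mathcal{H}(X)^{-1}X$ we get $\psi_\mathcal{H}^{-1}(\mu_\mathcal{H}(X)^{-1}X) = [X]$. Applying $\widetilde\varphi$ and then $\psi_\mathcal{K}$ gives
\[
f\bigl(\mu_\mathcal{H}(X)^{-1}X\bigr) = \psi_\mathcal{K}\bigl([\varphi(X)]\bigr) = \mu_\mathcal{K}(\varphi(X))^{-1}\,\varphi(X) = \alpha(X)^{-1}\varphi(X),
\]
and multiplying on the left by $\alpha(X)$ (recall $K$, hence $\alpha(X)$, is an element of the abelian group $K$) yields $\varphi(X) = \alpha(X)\,f(\mu_\mathcal{H}(X)^{-1}X)$, as wanted.

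There is no serious obstacle here: the argument is essentially a bookkeeping exercise in chasing the three identifications $\widetilde\varphi$, $\psi_\mathcal{H}$, $\psi_\mathcal{K}$ through their definitions. The only points requiring a modicum of care are (i) confirming that the domain and codomain of the composite $f$ are exactly $\mathcal P_{\fin,1}(\mathcal H_\angle)$ and $\mathcal P_{\fin,1}(\mathcal K_\angle)$, which is immediate from Proposition \ref{prop:iso-into-the-reduced-finitary-PM}; (ii) using commutativity of $K$ to rearrange $\alpha(X)^{-1}\varphi(X)$; and (iii) recalling that $\psi_\mathcal{H}$ sends $[X]$ to $\mu_\mathcal{H}(X)^{-1}X$ — this is not the verbatim definition given in Proposition \ref{prop:iso-into-the-reduced-finitary-PM}, but it is established within its proof, so I would cite that step explicitly.
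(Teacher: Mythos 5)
Your proposal is correct and follows essentially the same route as the paper: both construct $f := \psi_\mathcal{K} \circ \widetilde\varphi \circ \psi_\mathcal{H}^{-1}$ from Propositions \ref{prop:induced-iso-on-reduced-quotients} and \ref{prop:iso-into-the-reduced-finitary-PM} and use Remark \ref{rem:minimizer}\ref{rem:minimizer(2)} for the homomorphism properties. The only (harmless) difference is bookkeeping: you take $\alpha = \mu_\mathcal{K} \circ \varphi$ and chase $[X]$ directly through $f$, whereas the paper factors $X$ as $\{\mu_\mathcal{H}(X)\} \cdot \mu_\mathcal{H}(X)^{-1}X$ and writes the (same) homomorphism $\alpha$ as $X \mapsto \mu_\mathcal{K}(\varphi(\mu_\mathcal{H}(X)^{-1} X))\,\varphi(\{\mu_\mathcal{H}(X)\})$; also note that $\psi_\mathcal{H}([X]) = \mu_\mathcal{H}(X)^{-1}X$ is immediate from the statement of Proposition \ref{prop:iso-into-the-reduced-finitary-PM} (take $Y = X$), so no appeal to its proof is needed.
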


\begin{proof}
Denote by $\mathcal Q(H)$ and $\mathcal Q(K)$ the reduced quotients of $\mathcal P_\fin(H)$ and  $\mathcal P_\fin(K)$, respectively.
By Prop\-o\-si\-tion \ref{prop:iso-into-the-reduced-finitary-PM}, mapping the $\simeq_{\mathcal P_\fin(H)}$-class of a set $X \in \mathcal P_\fin(H)$ to $\mu_{\mathcal{H}}(X)^{-1} X$ yields a (well-defined) iso\-mor\-phism $\psi_H \colon \mathcal Q(H) \to \mathcal P_{\fin,1}(\mathcal H_\angle)$. In a similar way, mapping the $\simeq_{\mathcal P_\fin(K)}$-class of a set $Y \in \mathcal P_\fin(K)$ to $\mu_{\mathcal{K}}(Y)^{-1} Y$ results in an isomorphism $\psi_K \colon \mathcal Q(K) \to \mathcal P_{\fin,1}(\mathcal K_\angle)$, where $\mu_\mathcal{K}$ is the minimizer of $\mathcal K$. 

In addition, we have from Proposition \ref{prop:induced-iso-on-reduced-quotients} that $\varphi$ induces an isomorphism $\widetilde{\varphi} \colon \mathcal Q(H) \to \mathcal Q(K)$ by sending the $\simeq_{\mathcal P_\fin(H)}$-class of a set $X \in \mathcal P_\fin(H)$ to the $\simeq_{\mathcal P_\fin(K)}$-class of $\varphi(X)$. It follows that the function
\[
f := \psi_K \circ \widetilde{\varphi} \circ \psi_H^{-1}
\]
is an isomorphism $\mathcal P_{\fin,1}(\mathcal H_\angle) \to \mathcal P_{\fin,1}(\mathcal K_\angle)$. More explicitly, if $X \in \mathcal P_{\fin,1}(\mathcal H_\angle)$, then $
\psi_H^{-1}(X) = \{hX \colon h \in H\}$
and hence $\widetilde{\varphi} \circ \psi_H^{-1}(X) = \{k \varphi(X) \colon k \in K\}$. Accordingly, it holds that
\[
f(X) = \psi_K(\widetilde{\varphi} \circ \psi_H^{-1}(X)) = \mu_\mathcal{K}( \varphi(X))^{-1} \varphi(X), \qquad \text{for every } X \in \mathcal P_{\fin,1}(\mathcal H_\angle).
\]
Since $\mu_\mathcal{H}(X)^{-1} X \in \mathcal P_{\fin,1}(\mathcal H_\angle)$ for each $X \in \mathcal P_\fin(H)$, we can thus conclude that, for all $X \in \mathcal P_\fin(H)$,
\begin{equation*}
\begin{split}
\varphi(X) & = \varphi(\mu_\mathcal{H}(X)^{-1} X) \,\varphi(\{\mu_\mathcal{H}(X)\}) = \mu_\mathcal{K}(\varphi(\mu_\mathcal{H}(X)^{-1} X)) f(\mu_\mathcal{H}(X)^{-1} X) \,\varphi(\{\mu_\mathcal{H}(X)\}).
\end{split}
\end{equation*}
This is enough to finish the proof, upon observing that, by Remark \ref{rem:minimizer}\ref{rem:minimizer(2)}, the function 
\[
\mathcal P_\fin(H) \to K' \colon X \mapsto \mu_\mathcal{K}(\varphi(\mu_\mathcal{H}(X)^{-1} X)) \, \varphi(\{\mu_\mathcal{H}(X)\})
\]
is a homomorphism, where $K'$ is the image of $K$ under the embedding $K \to \mathcal P_\fin(K) \colon y \mapsto \{y\}$.
\end{proof}

\section{Main results}\label{sec:main-results}

Throughout, $\mathcal P_{\fin}(\mathbb Z)$ denotes the finitary power monoid of $\mathbb Z_+$ (the additive group of integers), and $\mathcal P_{\fin,0}(\mathbb N)$ denotes the reduced finitary power monoid of the monoid of non-negative integers under addition. We will write each of these monoids additively. We start with a series of lemmas.

\begin{lemma}\label{lem_3.1}
If $f$ is a homomorphism from $\mathcal P_\fin(\mathbb Z)$ to $\mathbb Z_+$, then there exist $a, b \in \mathbb Z$ such that 
\[
f(X) = a \min X + b \max X,
\qquad\text{for all }
X \in \mathcal P_\fin(\mathbb Z).
\]
\end{lemma}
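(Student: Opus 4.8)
The plan is to exploit the fact that $\mathcal P_\fin(\mathbb Z)$ is generated, as a monoid, by the one-element sets together with sets of a very controlled shape, and that $\mathbb Z_+$ is torsion-free abelian, so a homomorphism out of $\mathcal P_\fin(\mathbb Z)$ is rigidly constrained by its values on a small family of sets. First I would record the basic observation that the restriction of $f$ to the singletons gives a homomorphism $\mathbb Z \to \mathbb Z$, namely $t \mapsto f(\{t\})$, hence $f(\{t\}) = ct$ for some fixed $c \in \mathbb Z$ and all $t \in \mathbb Z$; this follows since $\{s\} + \{t\} = \{s+t\}$. Next, because every $X \in \mathcal P_\fin(\mathbb Z)$ can be translated to a set containing $0$ without changing $\max X - \min X$, and translation by $t$ means adding the unit $\{t\}$, it suffices to understand $f$ on sets $X$ with $\min X = 0$; for such a set, writing $m := \max X$, I want to show $f(X)$ depends only on $m$.

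The key step is to analyze sets of the form $\{0, m\}$ and, more generally, to show that any $X$ with $\min X = 0$ and $\max X = m$ satisfies $f(X) = f(\{0,m\})$. The idea is that $\{0,m\}$ and $X$ become "absorbed" into the same large sumset: for $k \ge 1$ one has $k X \supseteq \{0, m, 2m, \dots, km\}$, and in fact for $k$ large enough $kX = k\{0,m\} \cup (\text{stuff})$ — more precisely, there is a classical stabilization phenomenon (the sumset $kX$ is eventually an arithmetic-progression-like set with the same endpoints $0$ and $km$), so that $k X$ and $k\{0,m\}$ agree up to translation-free equality for large $k$, or at least one can sandwich $\{0,m\} + (k-1)X = kX$ and compare with $k\{0,m\}$. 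Applying $f$ and using additivity gives $k\, f(X)$ in terms of $f$ of a stabilized set, and matching the two computations (for $X$ and for $\{0,m\}$) forces $k f(X) = k f(\{0,m\})$, whence $f(X) = f(\{0,m\})$ since $\mathbb Z$ is torsion-free. Then I would set $g(m) := f(\{0,m\})$ for $m \ge 0$ and check $g$ is additive: $\{0,m\} + \{0,n\} \supseteq \{0, m+n\}$ but is not equal to it, so instead I would use that $\{0,m,m+n\} = \{0,m\} + \{0,n\}$ when... — actually $\{0,m\}+\{0,n\} = \{0,m,n,m+n\}$, which has min $0$ and max $m+n$, so by the previous paragraph $f(\{0,m\}+\{0,n\}) = g(m+n)$, while additivity of $f$ gives $g(m)+g(n)$; hence $g(m+n) = g(m)+g(n)$, so $g(m) = dm$ for some $d \in \mathbb Z$.

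Finally I would assemble the formula: for arbitrary $X$, write $X = \{\min X\} + X'$ where $X' := X - \min X$ has $\min X' = 0$ and $\max X' = \max X - \min X$, so that $f(X) = c\min X + g(\max X - \min X) = c \min X + d(\max X - \min X) = (c-d)\min X + d \max X$, giving the claim with $a := c-d$ and $b := d$. The main obstacle is making the stabilization argument of the second paragraph precise and rigorous without invoking heavy machinery — I expect the cleanest route is to avoid asymptotic statements entirely and instead find, for each fixed $X$ with endpoints $0$ and $m$, an explicit identity expressing a suitable sumset built from $X$ as equal to a suitable sumset built from $\{0,m\}$ (for instance, showing $X + \{0,m\} + \cdots$ or $(m+1)$-fold type sums collapse), so that one clean application of additivity does the job; verifying such an identity is the real content, everything else being routine.
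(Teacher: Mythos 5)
There is a genuine gap at the central step of your argument: the claim that $f(X)=f(\{0,m\})$ for every $X$ with $\min X=0$ and $\max X=m$. The mechanisms you float for it do not work as stated. It is false that $kX$ eventually agrees (even up to bounded fuss) with $k\{0,m\}$ or with the interval $\llb 0,km\rrb$: for $X=\{0,2,3\}$ one has $kX=\{0\}\cup\llb 2,3k\rrb$ for all $k\ge 3$, which never coincides with $k\{0,3\}=\{0,3,\dots,3k\}$ nor with $\llb 0,3k\rrb$. The sandwich identity $\{0,m\}+(k-1)X=kX$ is also false in general (same $X$, $k=2$: $\{0,3\}+X=\{0,2,3,5,6\}$ but $2X=\{0,2,3,4,5,6\}$); it does hold for all sufficiently large $k$, but proving that is essentially Nathanson's stabilization theorem for $kA$, i.e.\ exactly the ``heavy machinery'' you said you wanted to avoid. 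Since containments alone give nothing (a homomorphism to $\mathbb Z_+$ is not monotone in any useful sense), the proof genuinely hinges on an exact sumset identity that you explicitly defer (``verifying such an identity is the real content''), so the proposal as written does not prove the lemma.

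The missing identity is, however, elementary, and supplying it is what the paper does: for $m,n\in\mathbb N$ with $m+n\ge \max X-\min X$ one has
\begin{equation*}
\llb -m,n\rrb + X \;=\; \llb -m,n\rrb + \{\min X,\max X\} \;=\; \llb -m+\min X,\; n+\max X\rrb ,
\end{equation*}
because the two shifted intervals $\llb -m+\min X, n+\min X\rrb$ and $\llb -m+\max X, n+\max X\rrb$ already overlap and cover the whole range. Combining this with $\llb -h,k\rrb = h\{-1,0\}+k\{0,1\}$ and applying $f$, cancellation in $\mathbb Z_+$ gives $f(X)=a\min X+b\max X$ with $a:=-f(\{-1,0\})$, $b:=f(\{0,1\})$ in one stroke, with no need for the singleton/translation reductions. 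In your framework the same trick reads: $X+N\{0,1\}=\llb \min X,\max X+N\rrb=\{\min X,\max X\}+N\{0,1\}$ for $N\ge\max X-\min X$, whence $f(X)=f(\{\min X,\max X\})$ after cancelling $Nf(\{0,1\})$; with that lemma inserted, the rest of your outline (linearity on singletons, additivity of $g(m):=f(\{0,m\})$ via $\{0,m\}+\{0,n\}$, and the final assembly $a=c-d$, $b=d$) is correct and yields the statement.
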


\begin{proof}
Set $a := -f(\{-1,0\})$ and $b := f(\{0,1\})$, and fix $X \in \mathcal P_\fin(\mathbb Z)$. Given $m, n \in \mathbb N$ such that $\max X - \min X \le m + n$ (and hence $- m + \max X \le n + \min X$), we have 
\begin{equation}\label{lem_3.1:eq_(1)}
\begin{split}
\llb -m + \min X, n + \max X \rrb & = \llb -m+\min X, n + \min X \rrb \cup \llb -m + \max X, n + \max X \rrb \\
& = \llb -m, n \rrb + \{\min X, \max X\} \subseteq \llb -m, n \rrb + X \\
& \subseteq \llb -m, n \rrb + \llb \min X, \max X \rrb = \llb -m + \min X, n + \max X \rrb.
\end{split}
\end{equation}
Considering that
$\llb -h, k \rrb = h \{0, -1\} + k \{0, 1\}$ for all $h, k \in \mathbb N$,
and assuming for the remainder that $m \ge \allowbreak \min X$ and $n \ge - \max X$,
it is then immediate from Eq.~\eqref{lem_3.1:eq_(1)} that
\[
\begin{split}
 (m-\min X) \,\{-1,0\} & + (n+\max X)\, \{0,1\} = \llb -m + \min X, n + \max X \rrb \\
 & = \llb -m, n \rrb + X  = m\{-1,0\} + n\{0,1\} + X.
\end{split}
\]
Since $f$ is a homomorphism from $\mathcal P_\fin(\mathbb Z)$ to the additive group of integers, it follows that
\[
-(m - \min X)\, a + (n + \max X)\, b = -ma + nb + f(X),
\]
which, by cancellativity in $\mathbb Z_+$, implies $f(X) = a \min X + b \max X$ and finishes the proof.
\end{proof}

\begin{lemma}
\label{lem:necessary-condition-on-autos}
Every automorphism of $\mathcal P_\fin(\mathbb Z)$ is of the form $\pm \varphi_{\alpha,\beta}$ for some $\alpha, \beta \in \mathbb Z$, where $\varphi_{\alpha,\beta}$ is the function on $\mathcal P_\fin(\mathbb Z)$ defined by $X \mapsto X + \alpha \min X + \beta \max X$.
\end{lemma}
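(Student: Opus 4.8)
The plan is to derive the stated form by feeding an arbitrary automorphism of $\mathcal P_\fin(\mathbb Z)$ into Proposition~\ref{prop:2.3} and then invoking the known classification of $\aut(\mathcal P_{\fin,0}(\mathbb N))$. Let $\varphi$ be an automorphism of $\mathcal P_\fin(\mathbb Z)$, and apply Proposition~\ref{prop:2.3} with $\mathcal H = \mathcal K = (\mathbb Z, \le)$; this is an abelian totally ordered group whose non-negative cone is $\mathbb N$ and whose minimizer is $\min$ (Remark~\ref{rem:minimizer}\ref{rem:minimizer(1)}), so that $\mathcal P_{\fin,1}(\mathcal H_\angle) = \mathcal P_{\fin,0}(\mathbb N)$. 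One thus obtains a homomorphism $\eta \colon \mathcal P_\fin(\mathbb Z) \to \mathbb Z_+$ and an automorphism $f$ of $\mathcal P_{\fin,0}(\mathbb N)$ such that, written additively,
\[
\varphi(X) = \eta(X) + f(X - \min X), \qquad X \in \mathcal P_\fin(\mathbb Z),
\]
where $X - \min X := \{x - \min X \colon x \in X\} \in \mathcal P_{\fin,0}(\mathbb N)$ and $\eta(X)$ is shorthand for translation by the integer $\eta(X)$.

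Next I would invoke \cite[Theorem 3.2]{Tri-Yan2023(b)}, according to which the only automorphisms of $\mathcal P_{\fin,0}(\mathbb N)$ are the identity and the ``reflection'' $\varrho \colon Y \mapsto \max Y - Y$ (that $\varrho$ is an automorphism is immediate from $0 \in Y$ and $\max(Y_1 + Y_2) = \max Y_1 + \max Y_2$). Moreover, by Lemma~\ref{lem_3.1}, there are $a, b \in \mathbb Z$ with $\eta(X) = a \min X + b \max X$ for every $X \in \mathcal P_\fin(\mathbb Z)$. If $f = \id$, then
\[
\varphi(X) = \eta(X) + (X - \min X) = X + (a-1)\min X + b \max X = \varphi_{a-1, b}(X).
\]
If instead $f = \varrho$, then, since $\max(X - \min X) = \max X - \min X$, we get $f(X - \min X) = (\max X - \min X) - (X - \min X) = \max X - X$, whence
\[
\varphi(X) = \eta(X) + (\max X - X) = -\bigl(X + (-a)\min X + (-(b+1))\max X\bigr) = -\varphi_{-a, -(b+1)}(X).
\]
In either case $\varphi = \pm\varphi_{\alpha,\beta}$ for suitable $\alpha, \beta \in \mathbb Z$, which is the claim.

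I do not expect a genuine obstacle here: the substantive work is packaged in Proposition~\ref{prop:2.3}, Lemma~\ref{lem_3.1}, and the cited two-element classification of $\aut(\mathcal P_{\fin,0}(\mathbb N))$, and what remains is a one-line computation in each of the two cases. The only points demanding care are translating the multiplicative conventions of Section~2 into the additive conventions used for $\mathcal P_\fin(\mathbb Z)$ without sign errors, and keeping in mind that the lemma records only a \emph{necessary} condition: most of the maps $\varphi_{\alpha,\beta}$ (for instance $\varphi_{-1,0} \colon X \mapsto X - \min X$, which collapses $\{0,1\}$ and $\{1,2\}$) are not automorphisms, so no converse is being asserted at this stage.
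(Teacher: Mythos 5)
Your proposal is correct and follows essentially the same route as the paper's own proof: decompose $\varphi$ via Proposition~\ref{prop:2.3} with $\mathcal H = \mathcal K = (\mathbb Z,\le)$, describe the homomorphism into $\mathbb Z_+$ by Lemma~\ref{lem_3.1}, and then split into the two cases $f = \id$ and $f = \mathrm{rev}$ using \cite[Theorem 3.2]{Tri-Yan2023(b)}, arriving at exactly $\varphi = \varphi_{a-1,b}$ or $\varphi = -\varphi_{-a,-(b+1)}$. The sign bookkeeping in your second case matches the paper's, so nothing further is needed.
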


\begin{proof}
Let $\varphi$ be an automorphism of $\mathcal P_\fin(\mathbb Z)$. By Remark \ref{rem:minimizer}\ref{rem:minimizer(1)}, Proposition \ref{prop:2.3} (applied with $H = K = \mathbb Z_+$), and Lemma \ref{lem_3.1}, there exist $f \in \aut(\mathcal P_{\fin,0}(\mathbb N))$ and $a, b \in \mathbb Z$ such that
\begin{equation}\label{equ:lem:necessary-condition-on-autos}
\varphi(X) = f(X - \min X) + a \min X + b \max X, 
\qquad \text{for every } X \in \mathcal P_\fin(\mathbb Z).
\end{equation}
On the other hand, we are guaranteed by \cite[Theorem 3.2]{Tri-Yan2023(b)} that the only automorphisms of $\mathcal P_{\fin,0}(\mathbb N)$ are the identity $\mathrm{id} \colon X \mapsto X$ and the map $\mathrm{rev} \colon X \mapsto \max X-X$. Taking $f = \mathrm{id}$ in Eq.~\eqref{equ:lem:necessary-condition-on-autos} yields
\[
\varphi(X) =  
X + (a-1) \min X + b \max X, 
\qquad \text{for every } X \in \mathcal P_\fin(\mathbb Z);
\]
while taking $f = \mathrm{rev}$ and noting that $\mathrm{rev}(Y + k) = \mathrm{rev}(Y)$ for all $Y \in \mathcal P_\fin(\mathbb Z)$ and $k \in \mathbb Z$ lead to
\[
\varphi(X) = 
{-X} + a \min X + (b+1) \max X,
\qquad \text{for every } X \in \mathcal P_\fin(\mathbb Z).
\]
In the notation of the statement, this means that either $\varphi = \varphi_{a-1,b}$ or $\varphi = {-\varphi_{-a,-(b+1)}}$ (as wished).
\end{proof}

\begin{lemma}
\label{lem:compositions}
Using the notation of Lemma \ref{lem:necessary-condition-on-autos}, define $f_\alpha := \varphi_{\alpha,-\alpha}$ and $g_\alpha := \varphi_{\alpha-1,-(\alpha+1)}$, where $\alpha$ is an arbitrary integer. For all $a, b \in \mathbb Z$, we have 
\begin{equation*}
f_a \circ f_b = f_{a+b} = g_a \circ g_{-b}
\qquad\text{and}\qquad
f_a \circ g_b = g_{a+b} = g_b \circ f_{-a}.
\end{equation*}
\end{lemma}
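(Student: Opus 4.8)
The plan is to use that each map $\varphi_{\alpha,\beta}$ acts on a set $X \in \mathcal P_\fin(\mathbb Z)$ purely by \emph{translation}: since $\alpha\min X + \beta\max X$ is an integer, we have $\varphi_{\alpha,\beta}(X) = X + (\alpha\min X + \beta\max X)$, so $\varphi_{\alpha,\beta}(X)$ is the unique translate of $X$ with $\min\varphi_{\alpha,\beta}(X) = (1+\alpha)\min X + \beta\max X$ and $\max\varphi_{\alpha,\beta}(X) = \alpha\min X + (1+\beta)\max X$. Encoding $X$ by the column vector $v(X) := (\min X, \max X)^{\mathsf{T}} \in \mathbb Z^2$, this reads $v(\varphi_{\alpha,\beta}(X)) = M_{\alpha,\beta}\,v(X)$, where $M_{\alpha,\beta} := \left(\begin{smallmatrix}1+\alpha & \beta \\ \alpha & 1+\beta\end{smallmatrix}\right)$. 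Since a composite of translation maps of $\mathcal P_\fin(\mathbb Z)$ is again a translation map, the first step is to note that any composite $\Psi$ of $f_\bullet$'s and $g_\bullet$'s likewise satisfies $v(\Psi(X)) = M_\Psi\,v(X)$ for the corresponding product $M_\Psi$ of matrices $M_{\alpha,\beta}$, and that $\Psi$ is recovered from $v \circ \Psi$ (it sends $X$ to the unique translate of $X$ with that minimum). Consequently, an identity $\Psi_1 = \Psi_2$ between two such composites follows as soon as one checks the matrix identity $M_{\Psi_1} = M_{\Psi_2}$.

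Next I would compute the relevant matrices. Setting $N := \left(\begin{smallmatrix}1 & -1 \\ 1 & -1\end{smallmatrix}\right)$ and $R := \left(\begin{smallmatrix}0 & -1 \\ -1 & 0\end{smallmatrix}\right)$, a direct check gives $M_{f_\alpha} = I + \alpha N$ and $M_{g_\alpha} = R + \alpha N$ for every $\alpha \in \mathbb Z$, together with the relations $N^2 = 0$, $R^2 = I$, $NR = N$, and $RN = -N$. The four asserted identities then reduce to the matrix computations
\[
(I + aN)(I + bN) = I + (a+b)N = (R + aN)(R - bN)
\]
and
\[
(I + aN)(R + bN) = R + (a+b)N = (R + bN)(I - aN),
\]
which are immediate from the listed relations. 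Reading $I + (a+b)N = M_{f_{a+b}}$ and $R + (a+b)N = M_{g_{a+b}}$, and translating back through the first paragraph (with $f_a \circ f_b$ corresponding to the product $M_{f_a}M_{f_b}$, and so on), we obtain $f_a \circ f_b = f_{a+b} = g_a \circ g_{-b}$ and $f_a \circ g_b = g_{a+b} = g_b \circ f_{-a}$.

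One could also dispense with the matrices entirely: each of $f_\bullet$, $g_\bullet$ and their pairwise composites has the form $X \mapsto X + (p\min X + q\max X)$ for integers $p, q$, the pair $(p,q)$ of a composite being obtained by substituting the min/max-transformation rule of the inner map into that of the outer one, and it remains only to check that the two sides of each claimed identity produce the same pair $(p,q)$. In any case, the mathematical content here is minimal; the sole point requiring care is the bookkeeping — the direction of composition, the minus sign in $f_\alpha(X) = X - \alpha(\max X - \min X)$, and the arguments $-b$ and $-a$ occurring on the right-hand sides. I do not anticipate any genuine obstacle.
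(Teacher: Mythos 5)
Your argument is correct: the matrices $M_{f_\alpha}=I+\alpha N$ and $M_{g_\alpha}=R+\alpha N$ with $N=\left(\begin{smallmatrix}1&-1\\1&-1\end{smallmatrix}\right)$, $R=\left(\begin{smallmatrix}0&-1\\-1&0\end{smallmatrix}\right)$ do satisfy $N^2=0$, $R^2=I$, $NR=N$, $RN=-N$, the four matrix identities follow at once, and the composition order matches the matrix order. The one point that genuinely needs justification --- that equality of matrices forces equality of the maps --- you handle correctly by observing that all maps in play are translation maps (composites of translation maps are translation maps), so each is determined by its effect on $\min$, equivalently on the pair $(\min X,\max X)$. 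The paper's own proof is the same bookkeeping in unlinearized form: it sets $\delta^\pm(Y):=\min Y\pm\max Y$, rewrites $f_\alpha(X)=X+\alpha\,\delta^-(X)$ and $g_\alpha(X)=f_\alpha(X)-\delta^+(X)$, derives the transformation rules $\delta^-(f_\alpha(X))=\delta^-(g_\alpha(X))=\delta^-(X)$ and $\delta^+(g_\alpha(X))=2\alpha\,\delta^-(X)-\delta^+(X)$, and then chains these through the four compositions by hand. Your encoding buys shorter verifications (each identity is a one-line product in the algebra generated by the nilpotent $N$ and the involution $R$) and makes the underlying structure visible: conjugation by $R$ negates $N$, which is exactly the relation $g_0\circ f_\alpha\circ g_0^{-1}=f_{-\alpha}$ exploited later in the proof of Theorem~\ref{thm:autogroup-of-Pfin(Z)}; the paper's direct computation, by contrast, needs no preliminary reduction step (the ``translation map'' observation) because its formulas already produce the image set explicitly. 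Either way the content is the same, and your write-up has no gap.
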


\begin{proof}
Given $Y \in \mathcal P_\fin(\mathbb Z)$, set $\delta^\pm(Y) := \min Y \pm \max Y$. For every $k \in \mathbb Z$, we have that
\begin{equation}
\label{lem:compositions:eq(1)}
\delta^\pm(Y+k) = \min(Y + k) \pm \max(Y + k) = 
\delta^\pm(Y) + k \pm k.
\end{equation}
It is then clear from our definitions that, for all $\alpha \in \mathbb Z$ and $ X \in \mathcal P_\fin(\mathbb Z)$,
\begin{equation}
\label{lem:compositions:eq(2)}
f_\alpha(X) = X + \alpha\min X - \alpha \max X = X + \alpha\, \delta^-(X)
\end{equation}
and
\begin{equation}
\label{lem:compositions:eq(3)}
g_\alpha(X) = X + (\alpha-1) \min X - (\alpha+1) \max X = X + \alpha\, \delta^-(X) - \delta^+(X)=f_\alpha(X)-\delta^+(X).
\end{equation}
By straightforward calculations, it follows that
\begin{equation}
\label{lem:compositions:eq(4)}
\delta^-(g_\alpha(X)) 
\stackrel{\eqref{lem:compositions:eq(3)}}{=} 
\delta^-(f_\alpha(X)-\delta^+(X))
\stackrel{\eqref{lem:compositions:eq(1)}}{=} 
\delta^-(f_\alpha(X))
\stackrel{\eqref{lem:compositions:eq(2)}}{=} 
\delta^-(X+\alpha\,\delta^-(X))
\stackrel{\eqref{lem:compositions:eq(1)}}{=} 
\delta^-(X),
\end{equation}
and in a similar way,
\begin{equation}
\label{lem:compositions:eq(8)}
\delta^+(g_\alpha(X)) 
\stackrel{\eqref{lem:compositions:eq(3)}}{=} \delta^+(X + \alpha \, \delta^-(X) - \delta^+(X)) \stackrel{\eqref{lem:compositions:eq(1)}}{=} 
2\alpha \, \delta^-(X) - \delta^+(X),
\end{equation}
where the notation $\stackrel{(\ast)}{=}$ means that the equality is justified by Eq.~($\ast$).

Now, fix $a, b \in \mathbb Z$ and $X \in \mathcal P_\fin(\mathbb Z)$. From the previous equations, we readily see that
\begin{equation}
\label{lem:compositions:eq(5)}
\begin{split}
f_a \circ f_b(X) 
\stackrel{\eqref{lem:compositions:eq(2)}}{=}
f_b(X) + a\, \delta^-(f_b(X)) 
\stackrel{\eqref{lem:compositions:eq(4)}}{=}
f_b(X) + a\, \delta^-(X) 
\stackrel{\eqref{lem:compositions:eq(2)}}{=}
f_{a+b}(X)
\end{split}
\end{equation}
and 
\begin{equation}
\label{lem:compositions:eq(6)}
\begin{split}
f_a \circ g_b(X) 
\stackrel{\eqref{lem:compositions:eq(2)}}{=}
g_b(X) + a\,\delta^-(g_b(X)) 
\stackrel{\eqref{lem:compositions:eq(4)}}{=}
g_b(X) + a\, \delta^-(X) 
\stackrel{\eqref{lem:compositions:eq(3)}}{=}
g_{a + b}(X).
\end{split}
\end{equation}
Consequently, we obtain 
\begin{equation}
\label{lem:compositions:eq(10)}
\begin{split}
g_a \circ g_{-b}(X)
& 
\stackrel{\eqref{lem:compositions:eq(3)}}{=}
f_a(g_{-b}(X)) - \delta^+(g_{-b}(X)) 
\stackrel{\eqref{lem:compositions:eq(6)}}{=}
g_{a-b}(X) - \delta^+(g_{-b}(X)) \\
&
\stackrel{\eqref{lem:compositions:eq(8)}}{=}
g_{a-b}(X) + 2b\, \delta^-(X) + \delta^+(X)
\stackrel{\eqref{lem:compositions:eq(3)}}{=}
f_{a-b}(X) + 2b\, \delta^-(X) 
\stackrel{\eqref{lem:compositions:eq(2)}}{=}
f_{a+b}(X)
\end{split}
\end{equation}
and
\begin{equation}
\label{lem:compositions:eq(11)}
\begin{split}
g_b \circ f_{-a}(X) 
& 
\stackrel{\eqref{lem:compositions:eq(2)}}{=}
g_b(X - a\, \delta^{-}(X)) 
\stackrel{\eqref{lem:compositions:eq(3)}}{=}
(X-a\delta^-(X))+b\,\delta^-(X-a\,\delta^-(X))-\delta^+(X-a\,\delta^-(X)) \\
&\stackrel{\eqref{lem:compositions:eq(1)}}{=}
(X-a\delta^-(X)) + b\,\delta^-(X) - \delta^+(X)+2a\,\delta^-(X)
\stackrel{\eqref{lem:compositions:eq(3)}}{=}
g_{a+b}(X). 
\end{split}
\end{equation}
Since $X$ is an arbitrary set in $\mathcal P_\fin(\mathbb Z)$, Eqs.~\eqref{lem:compositions:eq(5)}--\eqref{lem:compositions:eq(11)} lead to the desired conclusion.
\end{proof}

We are now in a position to prove the first of our main results.

\begin{theorem}
\label{thm:the-automorphisms-of-Pfin(Z)}
The automorphisms of $\mathcal P_\fin(\mathbb Z)$ are precisely the endofunctions of $\mathcal P_\fin(\mathbb Z)$ of the form $\pm f_\alpha$ and $\pm g_\alpha$, where $\alpha$ is an arbitrary integer and, for all $X \in \mathcal P_\fin(\mathbb Z)$, we define 
\[
f_\alpha(X) := X + \alpha \min X - \alpha \max X
\qquad\text{and}\qquad
g_\alpha(X) := X + (\alpha-1) \min X - (\alpha+1) \max X.
\]
\end{theorem}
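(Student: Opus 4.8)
The plan is to establish the theorem by proving two inclusions between the set $\aut(\mathcal P_\fin(\mathbb Z))$ and the set $\mathcal F := \{\pm f_\alpha, \pm g_\alpha : \alpha \in \mathbb Z\}$.

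\textbf{The inclusion $\aut(\mathcal P_\fin(\mathbb Z)) \subseteq \mathcal F$.} This direction is essentially a repackaging of Lemma \ref{lem:necessary-condition-on-autos}. That lemma tells us every automorphism is of the form $\pm\varphi_{\alpha,\beta}$ for some $\alpha,\beta\in\mathbb Z$, so it would suffice to show that whenever $\varphi_{\alpha,\beta}$ is an automorphism, in fact $\beta = -\alpha$ (giving $f_\alpha$) or $\beta = -(\alpha+1)$ and we re-index (giving $g_{\alpha+1}$), i.e.\ $\beta \in \{-\alpha, -\alpha-1\}$. Rather than re-deriving this from scratch, I would look back at the proof of Lemma \ref{lem:necessary-condition-on-autos}: it already shows that an automorphism arising from $f=\mathrm{id}$ has the shape $X \mapsto X + (a-1)\min X + b\max X$ with the additional constraint that this map actually be bijective, and the case $f=\mathrm{rev}$ gives $X \mapsto -X + a\min X + (b+1)\max X$. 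So the real content is to pin down which pairs $(a,b)$ make these maps automorphisms. For the identity-type maps $X \mapsto X + (a-1)\min X + b\max X$: applying to a singleton $\{n\}$ gives $\{(a+b)n\}$, and since singletons must map onto singletons bijectively (they are exactly the units of $\mathcal P_\fin(\mathbb Z)$ by \cite[Proposition 3.2(iv)]{An-Tr18}, preserved by any automorphism), we need $a+b \in \{1,-1\}$. A short further argument — e.g.\ evaluating on a two-element set like $\{0,1\}$ and using that the image must again have the right cardinality and that the inverse map must also have this form — should force $a + b = 1$, i.e.\ $b = 1-a$, so the map is $f_{a-1}$; and symmetrically the $\mathrm{rev}$-type maps become $g_\gamma$ after re-indexing. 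The main subtlety here is making the ``$a+b$ must be exactly $1$'' step clean: I expect the cleanest route is to note that $-f_\alpha$ and $-g_\alpha$ have leading coefficient $-1$ on singletons while $f_\alpha,g_\alpha$ have $+1$, and to absorb the sign into the global $\pm$, so that WLOG the singleton-coefficient is $+1$.

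\textbf{The inclusion $\mathcal F \subseteq \aut(\mathcal P_\fin(\mathbb Z))$.} This is where Lemma \ref{lem:compositions} does the heavy lifting. First I would check that each $f_\alpha$ and $g_\alpha$ is a homomorphism of $\mathcal P_\fin(\mathbb Z)$: using the identities $\min(X+Y)=\min X+\min Y$ and $\max(X+Y)=\max X+\max Y$ together with $(X+Y)+c = X + (Y+c)$, this is a direct computation showing, e.g., $f_\alpha(X+Y) = (X+Y) + \alpha\,\delta^-(X+Y) = (X+\alpha\delta^-(X)) + (Y+\alpha\delta^-(Y)) = f_\alpha(X)+f_\alpha(Y)$, and similarly for $g_\alpha$ (and then $-f_\alpha = (\text{inversion})\circ f_\alpha$ is a homomorphism since inversion $X\mapsto -X$ is). Next, bijectivity: from Lemma \ref{lem:compositions} with $b = a$ we get $f_a \circ f_{-a} = f_0 = \id$ (noting $f_0(X) = X$), so each $f_\alpha$ is invertible with inverse $f_{-\alpha}$; and from $f_a \circ g_b = g_{a+b}$, taking $a = -b$... actually cleaner: from the lemma, $g_a \circ g_{-b} = f_{a+b}$, so setting $a=b=0$ gives $g_0 \circ g_0 = f_0 = \id$, whence $g_0$ is an involution, and then $f_{-\alpha}\circ g_\alpha = g_0$ combined with $g_\alpha = f_\alpha\circ(\text{something})$... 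The right bookkeeping is: $g_\alpha \circ g_0 = f_\alpha \Rightarrow g_\alpha = f_\alpha \circ g_0$ (using $g_0^{-1}=g_0$), and since $f_\alpha, g_0$ are both bijections so is $g_\alpha$. Finally, $-f_\alpha$ and $-g_\alpha$ are bijections because $F \mapsto -F$ is an involution on endofunctions of $\mathcal P_\fin(\mathbb Z)$ (as inversion is an involution on $\mathbb Z$). Hence every element of $\mathcal F$ is a bijective homomorphism, i.e.\ an automorphism.

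\textbf{Expected main obstacle.} The genuinely non-routine part is the first inclusion — specifically, extracting from the proof of Lemma \ref{lem:necessary-condition-on-autos} the precise constraint on the parameters. Lemma \ref{lem:necessary-condition-on-autos} as stated only asserts the \emph{form} $\pm\varphi_{\alpha,\beta}$; it does not by itself say every $\varphi_{\alpha,\beta}$ is an automorphism, and indeed most are not (they fail to be surjective or injective). So I need a clean argument that bijectivity of $\varphi_{\alpha,\beta}$ forces $\beta \in \{-\alpha,-\alpha-1\}$. I anticipate the cleanest such argument runs through surjectivity: the map must hit $\{0,1\}$ and every singleton, and tracking $\delta^-$ and $\delta^+$ under $\varphi_{\alpha,\beta}$ (exactly as in Lemma \ref{lem:compositions}, Eqs.\ \eqref{lem:compositions:eq(1)}--\eqref{lem:compositions:eq(3)}) shows that the "width" $\max X - \min X$ transforms linearly with a factor that must be $\pm 1$ for surjectivity onto sets of all widths, and disentangling this from the singleton constraint $\alpha+\beta\in\{1,-1\}$ (up to the global sign) pins down the two admissible families. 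Once that is in hand, the theorem follows by combining it with the second inclusion.
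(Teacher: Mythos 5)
Your second inclusion ($\{\pm f_\alpha,\pm g_\alpha\}\subseteq\aut(\mathcal P_\fin(\mathbb Z))$) is fine and is essentially the paper's own argument: $\min$ and $\max$ are homomorphisms to $\mathbb Z_+$, so $f_\alpha$ and $g_\alpha$ are endomorphisms, and Lemma \ref{lem:compositions} yields $f_\alpha\circ f_{-\alpha}=g_0\circ g_0=f_0=\id$, whence $g_\alpha=f_\alpha\circ g_0$ is bijective as well; composing with the inversion automorphism $X\mapsto -X$ handles the signs.

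The first inclusion, however, contains a genuine error. You correctly note that an automorphism of the form $X\mapsto X+(a-1)\min X+b\max X$ must restrict to an automorphism of the unit group (the singletons), so that $\{n\}\mapsto\{(a+b)n\}$ forces $a+b\in\{1,-1\}$. But your next step --- a ``further argument'' that ``should force $a+b=1$'', with the $g$-family supposedly arising from the $\mathrm{rev}$ case --- is false, and any attempt to carry it out must fail: the value $a+b=-1$ is realized by genuine automorphisms, namely the entire $g$-family. Indeed $g_\alpha(X)=X+(\alpha-1)\min X-(\alpha+1)\max X$ is itself of ``identity type'' $+\varphi_{\alpha-1,-(\alpha+1)}$ (it comes from $f=\mathrm{id}$ in Lemma \ref{lem:necessary-condition-on-autos} with $a=\alpha$, $b=-(\alpha+1)$) and acts on singletons by $\{n\}\mapsto\{-n\}$; the $\mathrm{rev}$ case of that lemma produces $-f_\alpha$ and $-g_\alpha$, not the $g$'s. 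The global sign $\pm$ and the $f$/$g$ dichotomy are thus independent, and your proposed WLOG ``absorb the sign so the singleton coefficient is $+1$'' conflates them: $-g_\alpha$ has singleton coefficient $+1$ but is not of the form $X+\alpha'\min X+\beta'\max X$. Relatedly, your stated target ``$\beta\in\{-\alpha,-\alpha-1\}$'' is off by one: $\varphi_{\alpha,\beta}=g_{\alpha+1}$ exactly when $\beta=-\alpha-2$, so the correct condition is $\alpha+\beta\in\{0,-2\}$. The repair is immediate and lands on the paper's proof: keep both admissible values from the unit/singleton constraint, since $1+\alpha+\beta=1$ gives $f_\alpha$ and $1+\alpha+\beta=-1$ gives $g_{\alpha+1}$, with no further narrowing needed (the paper reaches the same point by requiring the system $\min\varphi(X)=c$, $\max\varphi(X)=d$ to be solvable for all $c\le d$ and specializing to $c=d=1$, which forces $a+b+1\in\{\pm1\}$). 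Finally, the ``width'' heuristic in your last paragraph gives no information: $\varphi_{\alpha,\beta}$ translates each $X$, so $\max X-\min X$ is always preserved regardless of $\alpha,\beta$; the actual constraint concerns which translates are attained, i.e.\ exactly the unit/divisibility argument above.
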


\begin{proof}
Let $\varphi \in \aut(\mathcal P_\fin(\mathbb Z))$. It is immediate to verify that $-\varphi$ is also an automorphism of $\mathcal P_\fin(\mathbb Z)$. 
By Lemma \ref{lem:necessary-condition-on-autos}, we may therefore assume without loss of generality that there exist $a, b \in \mathbb Z$ 
such that 
\begin{equation}\label{equ:(4)}
\varphi(X) = X + a \min X + b \max X, 
\qquad\text{for all } X \in \mathcal P_\fin(\mathbb Z).
\end{equation}
We seek necessary conditions on the pair $(a,b)$ for $\varphi$ to be bijective. To start with, Eq.~\eqref{equ:(4)} yields 
\begin{equation}
\label{equ:system-of-equations}
\begin{cases}
\min \varphi(X) = (a+1) \min X + b \max X, \\
\max \varphi(X) = a \min X + (b+1)\max X,
\end{cases}
\qquad\text{for every } X \in \mathcal P_\fin(\mathbb Z).
\end{equation}
Fix $c, d \in \mathbb{Z}$ with $c \le d$. By the surjectivity of $\varphi$, there exists $X \in \mathcal{P}_{\fin}(\mathbb{Z})$ such that $\min \varphi(X) = c$ and $\max \varphi(X) = d$. It follows by Eq.~\eqref{equ:system-of-equations} that the linear system in the variables $y$ and $z$ given by
\[
\begin{cases}
(a+1) y + b z = c, \\
a  y + (b + 1) z = d,
\end{cases}
\]  
has at least one solution with $y, z \in \mathbb Z$. Subtracting the second equation of the system from the first leads to $y = z + c - d$, which, when substituted back into the second equation, results in
\begin{equation*}
(a+b+1)z = d + ad - ac.
\end{equation*}

This ultimately proves that, for $\varphi$ to be surjective, $a+b+1$ must divide $d+ad-ac$ for all $c, d \in \mathbb Z$ with $c \le d$. In particular, taking $c = d = 1$ implies that $a+b+1 \in \{\pm 1\}$, and hence 
\[
a+b = 0 \qquad\text{or}\qquad a+b = -2.
\]
Therefore, in the notation of the statement, we find that either $\varphi = f_\alpha$ or $\varphi = g_\alpha$ for some $\alpha \in \mathbb Z$. We are left with showing that any such function is indeed an automorphism of $\mathcal P_\fin(\mathbb Z)$.

Let $\alpha \in \mathbb Z$. Since the maps $X \mapsto \min X$ and $X \mapsto \max X$ are both homomorphisms from $\mathcal P_\fin(\mathbb Z)$ to $(\mathbb Z, +)$, it is clear that each of $f_\alpha$ and $g_\alpha$ is an endomorphism of $\mathcal P_\fin(\mathbb Z)$. On the other hand, we know from Lemma \ref{lem:compositions} that $f_{\pm \alpha} \circ f_{\mp \alpha} = g_\alpha \circ g_\alpha = f_0$. Recognizing that $f_0$ is the identity function on $\mathcal P_\fin(\mathbb Z)$, we can thus conclude that $f_\alpha$ and $g_\alpha$ are bijections, thereby completing the proof.
\end{proof}

We finish by characterizing the autogroup of $\mathcal P_\fin(\mathbb Z)$. To this end, recall from Sect.~\ref{sec:intro} that we denote by $\mathbb Z_2$ the group of units of the ring of integers and by $\text{Dih}_\infty$ the infinite dihedral group.

\begin{theorem}\label{thm:autogroup-of-Pfin(Z)}
$\aut(\mathcal P_\fin(\mathbb Z)) \cong \mathbb Z_2 \times \text{Dih}_\infty$.
\end{theorem}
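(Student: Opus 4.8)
The plan is to realize the autogroup $G := \aut(\mathcal P_\fin(\mathbb Z))$ as an internal direct product $Z \times N$, where $N \cong \mathrm{Dih}_\infty$ and $Z \cong \mathbb Z_2$ is central. By Theorem~\ref{thm:the-automorphisms-of-Pfin(Z)}, the underlying set of $G$ is $\{f_\alpha, g_\alpha, -f_\alpha, -g_\alpha \colon \alpha \in \mathbb Z\}$, and I will take
\[
N := \{f_\alpha \colon \alpha \in \mathbb Z\} \cup \{g_\alpha \colon \alpha \in \mathbb Z\} \qquad\text{and}\qquad Z := \{f_0, -g_0\},
\]
recalling that $f_0$ is the identity of $G$ and writing $\iota := -f_0$ for the automorphism $X \mapsto -X$. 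To see that $N$ is a subgroup isomorphic to $\mathrm{Dih}_\infty$, I would rewrite the identities of Lemma~\ref{lem:compositions} as $f_a \circ f_b = f_{a+b}$, $g_a \circ g_b = f_{a-b}$, $f_a \circ g_b = g_{a+b}$, and $g_a \circ f_b = g_{a-b}$, from which $\{f_\alpha\}$ is infinite cyclic on $f_1$, each $g_\alpha$ is an involution (set $a=b$), and $g_0 \circ f_1 \circ g_0^{-1} = g_{-1} \circ g_0 = f_{-1} = f_1^{-1}$. Thus $N = \langle f_1, g_0\rangle$ is a quotient of $\mathrm{Dih}_\infty = \langle r,s \mid s^2 = 1,\ srs^{-1} = r^{-1}\rangle$ via $r \mapsto f_1$, $s \mapsto g_0$; and this quotient map is injective, because on the elements $r^n$ and $sr^n$ with $n \in \mathbb Z$ (which exhaust $\mathrm{Dih}_\infty$) it takes the values $f_n$ and $g_0 \circ f_n = g_{-n}$, where the $f_n$ are pairwise distinct, the $g_{-n}$ are pairwise distinct (compare images of $\{0,1\}$), and $f_n \ne g_m$ for all $n,m$ since $f_n(\{1\}) = \{1\} \ne \{-1\} = g_m(\{1\})$.

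Next I would show that $Z$ is a central subgroup of order $2$. A direct computation from the formulas in Theorem~\ref{thm:the-automorphisms-of-Pfin(Z)} yields the conjugation rules $\iota \circ f_\alpha \circ \iota = f_{-\alpha}$ and $\iota \circ g_\alpha \circ \iota = g_{-\alpha}$ (and $\iota \circ \iota = f_0$); in particular $\iota$ commutes with $g_0$, so $(-g_0) \circ (-g_0) = \iota \circ g_0 \circ \iota \circ g_0 = g_0 \circ g_0 = f_0$ and $Z \cong \mathbb Z_2$. Since $f_1$, $g_0$, and $\iota$ generate $G$---they produce, respectively, all $f_\alpha$, all $g_\alpha = f_\alpha \circ g_0$, and all $-f_\alpha = \iota \circ f_\alpha$ and $-g_\alpha = \iota \circ g_\alpha$---it suffices to check that $-g_0 = \iota \circ g_0$ commutes with each of them, which is a two-line computation in each case using the relations already collected; for instance $f_1 \circ (-g_0) \circ f_1^{-1} = \iota \circ f_{-1} \circ g_0 \circ f_{-1} = \iota \circ g_{-1} \circ f_{-1} = \iota \circ g_0 = -g_0$. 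Hence $Z$ lies in the center of $G$.

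To finish, I would assemble the internal direct product. We have $Z \cap N = \{f_0\}$ because $-g_0 \notin N$: every element of $N$ sends a set $X$ to a translate of $X$ (immediate from $f_\alpha(X) = X + c$ and $g_\alpha(X) = X + c'$ for suitable integers $c, c'$ depending on $X$), whereas $-g_0(\{0,1,3\}) = \{0,2,3\}$ is not a translate of $\{0,1,3\}$. Moreover $ZN = G$, since $(-g_0) \circ f_\alpha = \iota \circ g_{-\alpha} = -g_{-\alpha}$ and $(-g_0) \circ g_\alpha = \iota \circ f_{-\alpha} = -f_{-\alpha}$, so $(-g_0)N = \{-f_\beta, -g_\beta \colon \beta \in \mathbb Z\}$ and $N \cup (-g_0)N$ is the whole list of Theorem~\ref{thm:the-automorphisms-of-Pfin(Z)}. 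Consequently $N$ has index $2$ in $G$ and is therefore normal, $Z$ is normal because it is central, $Z$ and $N$ intersect trivially and commute elementwise, and hence $G = Z \times N \cong \mathbb Z_2 \times \mathrm{Dih}_\infty$, which is the assertion. I expect the only delicate points to be pinning down the conjugation rules for $\iota$ correctly and keeping the internal-direct-product bookkeeping straight; everything else reduces to routine checks against Theorem~\ref{thm:the-automorphisms-of-Pfin(Z)} and Lemma~\ref{lem:compositions}.
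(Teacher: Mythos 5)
Your proposal is correct, and it reaches the conclusion by a route that genuinely differs from the paper's in how the $\mathbb Z_2$ factor is split off. Both arguments isolate the same index-two subgroup $N=\{f_\alpha\colon\alpha\in\mathbb Z\}\cup\{g_\alpha\colon\alpha\in\mathbb Z\}$ and identify it with $\mathrm{Dih}_\infty$ --- the paper as the internal semidirect product $\{f_0,g_0\}\ltimes\{f_\alpha\}$ with a citation to Robinson, you via the standard presentation $\langle r,s\mid s^2=1,\ srs^{-1}=r^{-1}\rangle$ together with an explicit injectivity check; these are essentially the same use of Lemma~\ref{lem:compositions}. The real divergence is the complementary factor: the paper passes to the external product $\mathbb Z_2\times N$ and asserts that $(\pm 1,\phi)\mapsto\pm\phi$ is an isomorphism, whereas you realize the $\mathbb Z_2$ inside $\aut(\mathcal P_\fin(\mathbb Z))$ as the central subgroup $Z=\{f_0,-g_0\}$ generated by the reflection-in-place map $-g_0\colon X\mapsto \min X+\max X-X$, and then invoke the internal direct-product criterion. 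Your extra care here is well spent: negation $\iota=-f_0$ is \emph{not} central, as your own conjugation rule $\iota\circ f_\alpha\circ\iota=f_{-\alpha}$ shows (concretely, $f_1\circ\iota(\{0,1\})=\{-2,-1\}$ while $\iota\circ f_1(\{0,1\})=\{0,1\}$), so multiplicativity of the sign map $(\pm1,\phi)\mapsto\pm\phi$ would require $\phi(-Y)=-\phi(Y)$ for every $\phi\in N$, which fails for $f_\alpha$ with $\alpha\ne 0$; the paper's ``routine check'' glosses over precisely the point that your choice of the central involution $-g_0$ resolves. What the paper's formulation buys is brevity; what yours buys is a transparent and fully verifiable decomposition (centrality checked on the generators $f_1$, $g_0$, $\iota$; $Z\cap N$ trivial because every element of $N$ sends a set to a translate of itself while $-g_0$ does not; $ZN$ exhausting the list of Theorem~\ref{thm:the-automorphisms-of-Pfin(Z)}), at the modest cost of the bookkeeping with the relations $g_a\circ g_b=f_{a-b}$ and $g_a\circ f_b=g_{a-b}$, all of which you state and use correctly.
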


\begin{proof}
Given an integer $\alpha \in \mathbb{Z}$, let $f_\alpha$ and $g_\alpha$ be defined as in Lemma \ref{lem:compositions}. Accordingly, set 
\[
E := \{f_0, g_0\},
\qquad F := \{f_\alpha: \alpha \in \mathbb Z\}, \qquad
G := \{g_\alpha: \alpha \in \mathbb Z\},
\qquad\text{and}\qquad
H := F \cup G.
\]
It is immediate from Lemma \ref{lem:compositions} that each of $E$, $F$, and $H$ is a subgroup of $\aut(\mathcal P_\fin(\mathbb Z))$; in particular, note that $f_0$ is the identity automorphism of $\mathcal P_\fin(\mathbb Z)$, and observe that  $f_\alpha^{-1} = f_{-\alpha}$ and $g_\alpha^{-1} = g_\alpha$ for every $\alpha \in \mathbb Z$. Furthermore, $F$ is a normal subgroup of $H$, as we check that
\begin{equation}\label{thm_3.5:eq_(0)}
\phi \circ f_\alpha \circ \phi^{-1} = 
\begin{cases}  
f_\alpha & \text{if } \phi \in F, \\  
f_{-\alpha} & \text{if } \phi \in G,
\end{cases}  
\qquad \text{for every } \phi \in H \text{ and } \alpha \in \mathbb Z.
\end{equation}
Considering that $E \cap F = \{f_0\}$ and $EF = F \cup g_0 F = F \cup G = H$, this ultimately shows that $H = E \ltimes F$, namely, $H$ is the internal semidirect product of $E$ by $F$. We claim that 
\begin{equation}\label{thm_3.5:eq_(1)}
H = E \ltimes F \cong \text{Dih}_\infty. 
\end{equation}

Indeed, $F$ is an infinite cyclic group generated by $f_1$, as we have from Lemma \ref{lem:compositions} (and an elementary induction) that $f_\alpha$ is the $\alpha$-fold composition of $f_1$ for all $\alpha \in \mathbb{Z}$. On the other hand, $E$ is a cyclic group of order $2$ and, by Eq.~\eqref{thm_3.5:eq_(0)},
 the action of $g_0$ on $F$ by conjugation satisfies $
g_0 \circ f_\alpha \circ g_0^{-1} = f_{-\alpha} = \allowbreak f_\alpha^{-1}$ for each $
\alpha \in \mathbb Z$. Consequently, we gather from \cite[p.~51, Example (I)]{robins2012} that $E \ltimes F \cong \text{Dih}_\infty$ (as wished).  

Now, Theorem \ref{thm:the-automorphisms-of-Pfin(Z)} guarantees that $\aut(\mathcal P_\fin(\mathbb Z)) = \{\pm \phi: \phi \in H\}$. It is then routine to check (cf.~the first line of the proof of Theorem \ref{thm:the-automorphisms-of-Pfin(Z)}) that the function 
$(\pm 1, \phi) \mapsto \pm \phi$
provides an isomorphism from $\mathbb Z_2 \times H$ to $\aut(\mathcal P_\fin(\mathbb Z))$. Since it is a basic fact that $S_1 \times T_1 \cong S_2 \times T_2$ for all semigroups $S_1$, $S_2$, $T_1$, and $T_2$ such that $S_1 \cong S_2$ and $T_1 \cong T_2$, we can therefore conclude from Eq.~\eqref{thm_3.5:eq_(1)} that $\aut(\mathcal P_\fin(\mathbb Z)) \cong \mathbb Z_2 \times \allowbreak H \cong \mathbb Z_2 \times \text{Dih}_\infty$. 
\end{proof}

\section{Prospects for future research}
\label{sect:future}

Let $(G, \preceq)$ be a totally ordered group (as defined in the comment just before Proposition \ref{prop:iso-into-the-reduced-finitary-PM}), and recall from Sect.~\ref{sec:intro} that the reduced finitary power monoid $\mathcal P_{\fin,1}(G)$ of $G$ is the submonoid of $\mathcal P_\fin(G)$ consisting of all finite subsets of $G$ that contain the neutral element $1_G$. 

\begin{question*}
Is it true that every automorphism $\varphi$ of the \textit{reduced} finitary power monoid $\mathcal P_{\fin,1}(G)$ of $G$ is \evid{inner}, meaning that there exists an automorphism $f$ of $H$ such that $\varphi(X) = f[X]$ for every non-empty finite set $X \subseteq G$ containing the neutral element $1_G$? In particular, does this hold when $G$ is abelian?
\end{question*}

We conjecture a positive answer when $G = \mathbb Z_+$ (the additive group of integers), which, by Theorem \ref{thm:autogroup-of-Pfin(Z)}, amounts to proving that the only non-trivial automorphism of the reduced finitary power monoid of $\mathbb Z_+$ is given by $X \mapsto -X$ (recall that the only non-trivial automorphism of $\mathbb Z_+$ is the map $x \mapsto -x$). 

\section*{Acknowledgments}

The authors were both supported by grant no.~A2023205045, funded by the Natural Science Foun\-da\-tion of Hebei Province. They are grateful to Jun Seok Oh (Jeju National University, South Korea) for the reference to Grillet's book \cite{Gril-2001} used in the proof of Proposition \ref{prop:induced-iso-on-reduced-quotients}, and to Pedro Garc\'ia-S\'anchez (University of Granada, Spain) for many valuable suggestions that improved the presentation.


\begin{thebibliography}{99}
%
\bibitem{Agga-Gott-Lu-arXiv-21-12} A.~Aggarwal, F.~Gotti, and S.~Lu, \textit{On primality and atomicity of numerical power monoids}, preprint (arXiv: \href{https://doi.org/10.48550/arXiv.2412.05857}{\tt 2412.05857}).
%
\bibitem{Alm02} J.~Almeida, \textit{Some Key Problems on Finite Semigroups}, Semigroup Forum \textbf{64} (2002), 159--179.
%
\bibitem{An-Tr18} A.\,A.~Antoniou and S.~Tringali, \textit{On the Arithmetic of Power Monoids and Sumsets in Cyclic Groups}, Pacific J.\ Math. \textbf{312} (2021), No.\ 2, 279--308.
%
\bibitem{Bie-Ger-22} P.-Y.~Bienvenu and A.~Geroldinger, \textit{On algebraic properties of power monoids of numerical monoids}, Israel J.\ Math.\ \textbf{265} (2025), 867--900.
%
\bibitem{Cos-Tri2025} L.~Cossu and S.~Tringali, \textit{On power monoids and factorization}, preprint (arXiv: \href{https://arxiv.org/abs/2503.08615}{\tt 2503.08615}).
%
\bibitem{Cos-Tri2023} L.~Cossu and S.~Tringali, \textit{Factorization under local finiteness conditions}, J.\ Algebra \textbf{630}
(Sep 2023), 128--161.
%
\bibitem{Dani-et-al2025} Dani J., Gotti F., Hong L., Li B., and Schlessinger S., \textit{On finitary power monoids of linearly orderable monoids}, preprint (arXiv: \href{https://arxiv.org/abs/2501.03407}{\tt 2501.03407}).
%
\bibitem{Dum-Foo-2004} D.\,S.~Dummit and R.\,M.~Foote, \textit{Abstract Algebra}, Wiley \& Sons, 2004 (3rd edition). 
%
\bibitem{Fa-Tr18} Y.~Fan and S.~Tringali, \textit{Power monoids: A bridge between Factorization Theory and Arithmetic Combinatorics}, J.\ Algebra \textbf{512} (Oct.~2018), 252--294.
%
\bibitem{GarSan-Tri2025(a)}  P.\,A.~Garc\'ia-S\'anchez and S.~Tringali, \textit{Semigroups of ideals and isomorphism problems},  Proc.\ Amer.\ Math.\ Soc.\ \textbf{153} (2025), No.\ 6, 2323--2339.
%
\bibitem{Gonz-Li-Rabi-Rodr-Tira-2025} V.~Gonzalez, E.~Li, H.~Rabinovitz, P.~Rodriguez, and M.~Tirador, \textit{On the atomicity of power monoids of Puiseux monoids}, Internat.\ J.\ Algebra Comput.\ \textbf{35} (2025), No.~02, 167--181. 
%
\bibitem{Gril-2001} P.\,A.\ Grillet, \textit{Commutative Semigroups}, Adv.\  Math.\ \textbf{2}, Springer, 2001.
%
\bibitem{Pin1995} J.-E.~Pin, ``$BG=PG$: A success story'', pp.~33--47 in: J.~Fountain, \textit{Semigroups, Formal Languages and Groups}, NATO ASI Ser., Ser.\ C, Math.\ Phys.\ Sci.\ \textbf{466}, Kluwer, 1995.
%
\bibitem{robins2012} D.\,J.\,S.\ Robinson, \textit{A Course in the Theory of Groups}, Grad.\ Text Math.\ \textbf{80}, Springer, 1996 (2nd edition).
%
\bibitem{Sark2012} A.~S\'ark\"ozy, \textit{On additive decompositions of the set of quadratic residues modulo $p$}, Acta Arith.~\textbf{155} (2012), No.~1, 41--51.
%
\bibitem{Tam-Sha1967} T.~Tamura and J.~Shafer, \textit{Power semigroups}, Math.\ Japon.\ \textbf{12} (1967), 25--32; \textit{Errata}, ibid.\ \textbf{29} (1984), No.\ 4, 679.
%
\bibitem{Tr20(c)} S.~Tringali, \textit{An abstract factorization theorem and some applications}, J.\ Algebra \textbf{602} (July 2022), 352--380.

\bibitem{Tri-2023(c)} S.~Tringali, \textit{On the isomorphism problem for power semigroups}, to appear in: M.~Bre\v{s}ar, A.~Geroldinger, B.~Olberding, and D.~Smertnig (eds.), \textit{Recent Progress in Ring and Factorization Theory}, Springer Proc.\ Math.\ Stat.\ \textbf{477}, Springer, 2024 (DOI: \href{https://doi.org/10.1007/978-3-031-75326-8_20}{\tt 10.1007/978-3-031-75326-8\_20}).

\bibitem{Tri-Yan2023(b)} S.~Tringali and W.~Yan, \textit{On power monoids and their automorphisms}, J.\ Combin.\ Theory Ser.\ A \textbf{209} (2025), 105961, 16 pp.

\bibitem{Tri-Yan2023(a)} S.~Tringali and W.~Yan, \textit{A conjecture by Bienvenu and Geroldinger on power monoids}, Proc.\ Amer.\ Math.\ Soc.\ \textbf{153} (2025), No.~3, 913--919.



\end{thebibliography}
\end{document}